\documentclass[11pt,a4paper]{amsart}
\usepackage[T1]{fontenc}
\usepackage{lmodern}
\usepackage[textwidth=150mm,textheight=225mm,centering]{geometry}
\usepackage{mathtools,amssymb}
\usepackage{booktabs,array,float}
\usepackage{microtype}
\usepackage[numbers,sort&compress]{natbib}
\usepackage{xurl}
\usepackage[hidelinks]{hyperref}
\hypersetup{pdftitle={Low-Twist Matrix Covariants of Exterior Powers: Vanishing and Modular Phenomena},pdfauthor={Jingchuan Ma},pdfsubject={Research manuscript for Algebras and Representation Theory},pdfkeywords={polynomial representations, exterior powers, plethysm, Schur algebras, divided powers, modular representations}}
\newtheorem{theorem}{Theorem}[section]
\newtheorem{proposition}[theorem]{Proposition}
\newtheorem{lemma}[theorem]{Lemma}

\theoremstyle{remark}
\newtheorem{remark}[theorem]{Remark}
\newcommand{\GL}{\operatorname{GL}}
\newcommand{\End}{\operatorname{End}}
\newcommand{\Sym}{\operatorname{Sym}}
\newcommand{\Hom}{\operatorname{Hom}}
\newcommand{\Ind}{\operatorname{Ind}}
\newcommand{\Inf}{\operatorname{Inf}}
\newcommand{\Id}{\operatorname{Id}}
\newcommand{\wt}{\operatorname{wt}}
\newcommand{\sgnchar}{\operatorname{sgn}}
\newcommand{\rank}{\operatorname{rank}}
\newcommand{\cH}{\mathcal H}
\newcommand{\eps}{\varepsilon}
\newcommand{\F}{\mathbb F}
\newcommand{\Q}{\mathbb Q}
\newcommand{\Z}{\mathbb Z}
\newcommand{\doi}[1]{\url{https://doi.org/#1}}
\numberwithin{equation}{section}
\allowdisplaybreaks[1]
\title[Low-Twist Matrix Covariants of Exterior Powers]{Low-Twist Matrix Covariants of Exterior Powers:\\Vanishing and Modular Phenomena}
\author[J. Ma]{Jingchuan Ma}
\thanks{Jingchuan Ma (corresponding author), Fuzhou University Zhicheng College, Fuzhou 350002, Fujian, China. Email: \href{mailto:kiciot@qq.com}{\texttt{kiciot@qq.com}}. ORCID: \href{https://orcid.org/0009-0001-3703-471X}{0009-0001-3703-471X}}
\subjclass[2020]{Primary 20G05; Secondary 05E05, 20C30, 13A50}
\keywords{Polynomial representations, exterior powers, plethysm, Schur algebras, divided powers, modular representations}
\date{}

\begin{document}
\begin{abstract}
We study morphisms from symmetric powers of exterior powers to determinant-twisted endomorphism representations of general linear groups. At the minimal positive determinant twist, we prove vanishing over every field of characteristic different from two: if the exterior degree $r\geq3$ is odd, the symmetric degree satisfies $d\geq3$, and the underlying space has dimension $rd$, then the corresponding equivariant Hom space is zero. The proof uses a block-exchange sign and a universal root-subgroup identity, so it also applies in small odd characteristics without semisimplicity. We then determine the second-twist spaces for trivectors in characteristic zero. In dimension $3m$ and degree $2m$, they are scalar and one-dimensional for $m=2$, and zero for $m\geq3$. Plethystic conjugation reduces the latter vanishing to an elementary weight-support bound for exterior powers of the ten-dimensional space of ternary cubics. Finally, over fields of characteristic zero or odd characteristic, an exact computer-assisted classification in dimension nine gives a one-dimensional scalar Hom space in characteristic five and zero in characteristic zero and in every odd characteristic other than five.
\end{abstract}
\maketitle

\section{Introduction}\label{sec:introduction}
Let $V$ be an $n$-dimensional vector space over a field $k$. We consider
\begin{equation}\label{eq:H}
 \cH_{n,r,d,\ell}(V)
 =\Hom_{\GL(V)}\!\left(\Sym^d(\bigwedge^r V),
                  \End(V)\otimes\det(V)^\ell\right),
\end{equation}
where $\GL(V)$ acts on $\End(V)$ by conjugation. Here $\Sym^d$ is the ordinary coinvariant symmetric power, and the Hom is taken between polynomial representations of the algebraic group scheme $\GL(V)$. We use \emph{matrix covariant} for a morphism in \eqref{eq:H}. In small positive characteristic this is not the same object as an ordinary coordinate-polynomial map on $\bigwedge^r V$: diagonal evaluation can annihilate a nonzero morphism.

The action of the scalar torus gives the necessary condition
\begin{equation}\label{eq:balance-intro}
 rd=n\ell.
\end{equation}
This condition organizes the two positive determinant twists considered here. Our main result concerns the minimal twist and does not rely on a characteristic-zero character calculation.

\begin{theorem}[Vanishing at the minimal twist]\label{thm:minimal}
Let $k$ be a field with $\operatorname{char}k\neq2$. Suppose that $r\geq3$ is odd, $d\geq3$, and $\dim V=rd$. Then
\[
 \cH_{rd,r,d,1}(V)=0.
\]
\end{theorem}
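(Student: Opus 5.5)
A morphism $\Phi\in\cH_{rd,r,d,1}(V)$ is determined by its restriction to a single well-chosen weight space, and I will pin that restriction down using weights, a permutation symmetry, and one root subgroup. Set $n=rd$ and use the standard basis $e_1,\dots,e_n$ with its diagonal torus $T$. The target $\End(V)\otimes\det V$ has weights $\varepsilon_i-\varepsilon_j+(1,\dots,1)$. Its weight $(1,\dots,1)$ is the zero-weight part: diagonal matrices twisted by $\det$. Its other weights have exactly one coordinate $2$, one coordinate $0$, and the rest $1$.

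First I would show that $\Phi$ is determined by its values on the weight-$(1,\dots,1)$ space of $\Sym^d(\bigwedge^rV)$. The source is generated over the hyperalgebra (distribution algebra) of $\GL(V)$, or equivalently over the divided-power Schur algebra, by its high weight vectors. Rather than invoke that, I would argue directly: any weight $(1,\dots,1)+\varepsilon_i-\varepsilon_j$ vector in the source is a sum of monomials $w_1\cdots w_d$ in wedge basis vectors, and a divided power $E_{ij}^{(1)}$ moves it into the balanced weight. Since $\End(V)\otimes\det$ is a small module, I expect equivariance to let me recover $\Phi$ on these weight spaces from $\Phi$ on the balanced one. If that fails, I would handle the off-diagonal components of $\Phi$ separately by the same exchange argument.

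The balanced weight space is spanned by products $e_{B_1}\cdots e_{B_d}$, where $\{B_1,\dots,B_d\}$ is a partition of $[n]$ into $r$-subsets. The value of $\Phi$ there is a diagonal matrix $D(B)$. The monomial matrices in $N(T)$ permute these products up to sign. Equivariance under the permutation matrices gives $D(\sigma B)=\pm\sigma D(B)\sigma^{-1}$. Taking $\sigma$ to be a block-exchange, which swaps two blocks $B_a,B_b$ elementwise in order, gives the key sign. Because $r$ is odd, $\sigma$ is a product of $r$ transpositions, so $\det\sigma=-1$ on the twist. Meanwhile the monomial $e_{B_1}\cdots e_{B_d}$ is fixed, since $\Sym$ is commutative and each block maps to another block without internal reordering. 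Hence $D(B)=-\sigma D(B)\sigma^{-1}$. On diagonal entries outside $B_a\cup B_b$ this forces the entry to vanish, using $\operatorname{char}k\neq2$. Since $d\ge3$, every index lies outside some pair of blocks, so all entries vanish. For indices $i\in B_a$ we get $D_{ii}=-D_{\sigma(i)\sigma(i)}$, which is consistent with the above. Hence $D(B)=0$.

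The main obstacle is the reduction in the first step without semisimplicity. In small odd characteristic the balanced weight space need not generate the source under the group alone. So I would use a universal root-subgroup identity: for $u=1+tE_{ij}$, expand $\Phi(u\cdot x)=u\cdot\Phi(x)$ as a polynomial in $t$ and compare coefficients. This is valid for group schemes over $\Z$, hence in every characteristic. I would apply it to $x=e_{B_1}\cdots e_{B_d}$ with $i\in B_a$, $j\notin B_a$ adjusted. The linear coefficient in $t$ then expresses the off-balanced values of $\Phi$ through $D$ and $E_{ij}$-commutators, which are now zero. Verifying that every off-balanced basis monomial arises as such a linear coefficient, with coefficient $\pm1$ so that no characteristic $p$ kills it, is the delicate combinatorial check I expect to dominate the work.
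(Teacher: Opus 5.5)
Your argument is correct. The block-exchange half is exactly Lemma~\ref{lem:block-swap}. The difference is in the root step, which you run in the opposite direction.

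The paper takes an arbitrary $x\in M_{\delta+\eps_i-\eps_j}$ with $F(x)=cE_{ij}\otimes\varpi$ and applies the lowering element $u_{ji}(t)=I+tE_{ji}$. The $t$-coefficient of $u_{ji}(t)x$ lies in $M_\delta$, where $F$ already vanishes, so \eqref{eq:root-identity} gives $0=c(E_{jj}-E_{ii})\otimes\varpi$. That step is target-driven and uses nothing about the source beyond its weights.

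You instead apply the raising element $I+tE_{ij}$ to balanced monomials. This gives $\Phi(E_{ij}m_P)=[E_{ij},D(P)]\otimes\varpi=0$, so you need $E_{ij}M_\delta=M_{\delta+\eps_i-\eps_j}$. You call this the delicate check and leave it open, but it is immediate:
\begin{itemize}
\item In a basis monomial $x'$ of weight $\delta+\eps_i-\eps_j$ no block repeats, since a repeated $r$-block would give $r\geq3$ coordinates of multiplicity two. Hence $i$ lies in exactly two distinct blocks.
\item Replace $i$ by $j$ in one of them, say $C$. This gives a set partition $P$ in which $j$ lies only in $(C\setminus\{i\})\cup\{j\}$.
\item The derivation $E_{ij}$ therefore changes only that factor, and $E_{ij}m_P=\pm x'$ with no other terms.
\end{itemize}
The coefficient is a sign, so the characteristic plays no role and your proof closes. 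Your version uses the monomial basis where the paper uses the conjugation formula.

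Two minor corrections. First, in ``$i\in B_a$, $j\notin B_a$'' the roles are reversed: the linear term of $I+tE_{ij}$ replaces $j$ by $i$, so the modified block must contain $j$ and not $i$. Second, drop the claim that the source is generated over the hyperalgebra by its high weight vectors. It is unused, and it is false for general non-semisimple rational modules.
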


In characteristic zero, the relevant hook-plethysm vanishing is covered by Langley--Remmel \cite{LangleyRemmel2004}. The proof given here applies to actual Hom spaces in every odd characteristic, including those in which the polynomial representations are not semisimple. On the determinant-weight space, an exchange of two odd exterior blocks fixes a source monomial but changes the sign of an appropriate target coefficient. A universal root-subgroup identity then forces all remaining target coefficients to vanish. The argument is over the base field and over $k[t]$, not over the finite set of field-valued points. The classical degree-two moment map and the even-degree wedge map locate the boundaries of the statement.

The second twist for trivectors has $d=2m$ and $n=3m$. In characteristic zero its two possible target constituents are indexed by
\[
 \alpha_m=(2^{3m}),\qquad \beta_m=(3,2^{3m-2},1).
\]
Proposition~\ref{prop:second-twist} determines their multiplicities in $h_{2m}[e_3]$: the scalar coefficient is one at $m=2$ and zero for $m\geq3$, while the other coefficient is always zero. The boundary $m=2$ is known, including the non-scalar vanishing in de Boeck's thesis \cite[p.~55]{deBoeck2015Thesis}. For $m\geq3$, plethystic conjugation gives a short weight-support proof: in an exterior product of $2m$ distinct ternary cubics, at least $2m-4$ factors must involve any prescribed variable. The required highest weights have third coordinate zero or one. We present this characteristic-zero classification as a companion to Theorem~\ref{thm:minimal}.

An isolated modular companion occurs at $(n,r,d,\ell)=(9,3,6,2)$. Theorem~\ref{thm:modular} shows that \eqref{eq:H} is zero in characteristic zero and every odd characteristic other than five. In characteristic five it is one-dimensional and scalar-valued. This result is supported by an integral root-equation matrix and exact certificates. The exceptional scalar component lies in
\[
 \Gamma^6\!\left((\bigwedge^3V)^*\right)\otimes\det(V)^2,
\]
and its value on every pure sixth power is zero. It therefore gives no nonzero ordinary degree-six coordinate invariant.

Matrix covariants are also natural candidates for producing conjugacy-compatible objects in form-equivalence problems. This motivation does not enlarge the scope of the results: only the Hom spaces explicitly stated here are classified, and no algorithmic lower bound or exclusion of other invariant methods is claimed.

Section~\ref{sec:preliminaries} fixes the representation-theoretic conventions. Section~\ref{sec:minimal} proves Theorem~\ref{thm:minimal} and treats its boundaries. Section~\ref{sec:second} gives the characteristic-zero second-twist calculation. Section~\ref{sec:modular} treats the characteristic-five class, and Appendix~\ref{app:certificate} gives the finite reduction and computational certificate.

\section{Polynomial representations and weights}\label{sec:preliminaries}
Throughout, $n,r,d\geq1$ and $\ell\geq1$. The source in \eqref{eq:H} is the ordinary symmetric power, not the divided power. The target action is
\begin{equation}\label{eq:target-action}
 g\cdot(X\otimes\eta)=(gXg^{-1})\otimes\det(g)^\ell\eta
\end{equation}
after expressing the determinant line in a fixed basis. The target is polynomial for $\ell\geq1$, since $V^*\otimes\det(V)\cong\bigwedge^{n-1}V$.

For a partition $\lambda$, write $S_\lambda V$ for the corresponding Schur module and $s_\lambda$ for its Schur function. In characteristic zero the universal character of $\Sym^d(\bigwedge^r V)$ is $h_d[e_r]$; specialization to $n$ variables gives the character on $V$. We use the Hall inner product for which the Schur functions are orthonormal, with the conventions of Macdonald \cite{Macdonald1995}.

For $n\geq2$ and in characteristic zero,
\begin{equation}\label{eq:target-split}
 \End(V)\otimes\det(V)^\ell
 \cong S_{(\ell^n)}V\oplus S_{(\ell+1,\ell^{n-2},\ell-1)}V.
\end{equation}
The first summand is the scalar line. This decomposition will be used only in characteristic zero. In particular, when $\operatorname{char}k$ divides $n$, the scalar line is contained in the trace-zero subspace, so it is not a complementary summand to that subspace.

For arbitrary $k$, equivariance means an identity of rational representations of the algebraic group scheme. In fixed homogeneous polynomial degree this is the corresponding Schur-algebra Hom \cite{Green1980}. Strict polynomial functors provide a compatible framework \cite{FriedlanderSuslin1997,AquilinoReischuk2017}, but we work with the displayed finite-dimensional modules. Over finite fields these conditions are stronger than equivariance for the finite abstract group of field-valued points.

\begin{proposition}[Scalar weight constraint]\label{prop:balance}
If $\cH_{n,r,d,\ell}(V)\neq0$, then $rd=n\ell$.
\end{proposition}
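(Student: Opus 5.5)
The plan is to restrict the equivariance condition to the scalar subgroup scheme $\mathbb G_m\subset\GL(V)$, $t\mapsto t\cdot\Id_V$, and compare its weights on source and target. Equivariance for the group scheme means that the comodule structure maps are intertwined. Restricting along the closed embedding $\mathbb G_m\to\GL(V)$ therefore shows that any $\phi\in\cH_{n,r,d,\ell}(V)$ is a morphism of $\mathbb G_m$-modules, that is, of $\Z$-graded vector spaces. This holds over any field and needs no semisimplicity. Weight spaces for a diagonalizable group scheme are preserved by morphisms, so $\phi$ maps the weight-$j$ part of the source into the weight-$j$ part of the target.

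Next, I compute the weights. On $\bigwedge^r V$ the scalar $t$ acts by $t^r$. Hence it acts on $(\bigwedge^rV)^{\otimes d}$ by $t^{rd}$, and therefore on the quotient $\Sym^d(\bigwedge^rV)$ by $t^{rd}$ as well. So the whole source is concentrated in the single weight $rd$. On the target, by \eqref{eq:target-action}, $t\Id$ sends $X\otimes\eta$ to $(tXt^{-1})\otimes t^{n\ell}\eta=X\otimes t^{n\ell}\eta$. Thus the target is concentrated in weight $n\ell$. To make this rigorous at the level of comodules, I would write the coaction of $k[t,t^{-1}]$ on a basis vector of each side as $v\mapsto v\otimes t^{rd}$, respectively $v\mapsto v\otimes t^{n\ell}$.

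Then $\phi$ must satisfy $\phi(v)\otimes t^{rd}=\phi(v)\otimes t^{n\ell}$ in $W\otimes k[t,t^{-1}]$, where $W$ is the target. If $rd\neq n\ell$, the monomials $t^{rd}$ and $t^{n\ell}$ are linearly independent over $k$, so $\phi(v)=0$ for every $v$. This gives the contrapositive of the proposition.

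There is essentially no obstacle here. The only point needing care is that in small characteristic, or over a finite field, one cannot argue with finitely many field points $t\in k^\times$, since $t^{rd}=t^{n\ell}$ could hold for all of them. This is exactly why the argument should be phrased through the coaction, equivalently over $k[t]$ or $k[t,t^{-1}]$, rather than through evaluation at points.
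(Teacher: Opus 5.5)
Your proposal is correct and follows the same route as the paper: you restrict to the scalar torus, note that the source is concentrated in weight $rd$ and the target in weight $n\ell$, and conclude that a nonzero equivariant map forces these weights to agree. Your coaction formulation over $k[t,t^{-1}]$ only makes explicit the scheme-theoretic point, namely avoiding evaluation at finitely many field points, which the paper leaves implicit in its one-line comparison of torus characters.
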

\begin{proof}
The scalar torus acts on the source with character $z^{rd}$ and on the target with character $z^{n\ell}$. A nonzero equivariant map forces equality of these torus characters.
\end{proof}

We shall also use the parity-sensitive conjugation identity for plethysm \cite[Eq.~(1)]{LangleyRemmel2004}. Let $\omega(p_j)=(-1)^{j-1}p_j$, so that $\omega(s_\lambda)=s_{\lambda'}$.
\begin{lemma}[Plethystic conjugation]\label{lem:omega}
If $g$ is homogeneous of degree $r$, then
\begin{equation}\label{eq:omega}
 \omega(f[g])=\omega^r(f)[\omega(g)],
\end{equation}
where $\omega^r$ is $\omega$ for odd $r$ and the identity for even $r$. In particular, $\omega(h_q[e_3])=e_q[h_3]$.
\end{lemma}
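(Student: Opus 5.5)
The identity \eqref{eq:omega} is an identity in the ring of symmetric functions over $\Q$, and both sides are $\Q$-linear in $f$. Plethysm is also linear in the outer argument, and $\omega$ is a ring automorphism. So it suffices to check \eqref{eq:omega} on the power-sum basis, and then only for products of power sums. Since $f\mapsto f[g]$ is a ring homomorphism and $\omega$, $\omega^r$ are ring homomorphisms, both sides of \eqref{eq:omega} are multiplicative in $f$. We therefore reduce to $f=p_j$ for a single $j\geq1$.

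For $f=p_j$ we have $p_j[g]=g[p_j]$, which is $g$ with every power sum $p_i$ replaced by $p_{ij}$. Write $g=\sum_\mu c_\mu p_\mu$, with $\mu$ running over partitions of $r$. Then $p_j[g]=\sum_\mu c_\mu p_{j\mu}$, where $j\mu$ scales every part by $j$. Applying $\omega$, which acts on $p_\nu$ by the sign $(-1)^{|\nu|-\ell(\nu)}$, gives
\[
\omega(p_j[g])=\sum_\mu c_\mu(-1)^{jr-\ell(\mu)}p_{j\mu}.
\]
On the other side, $\omega(g)=\sum_\mu c_\mu(-1)^{r-\ell(\mu)}p_\mu$. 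Also $\omega^r(p_j)$ equals $(-1)^{j-1}p_j$ for $r$ odd and $p_j$ for $r$ even, so it is $(-1)^{(j-1)r}p_j$ in both cases. Hence
\[
\omega^r(p_j)[\omega(g)]=(-1)^{(j-1)r}\sum_\mu c_\mu(-1)^{r-\ell(\mu)}p_{j\mu}.
\]
The two signs agree because $(j-1)r+r-\ell(\mu)=jr-\ell(\mu)$. This proves \eqref{eq:omega}.

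For the particular case, take $f=h_q$ and $g=e_3$, so $r=3$ is odd. Then $\omega(e_3)=h_3$ and $\omega(h_q)=e_q$, and \eqref{eq:omega} gives $\omega(h_q[e_3])=e_q[h_3]$.

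The step that needs care is the multiplicativity reduction. It relies on plethysm $f\mapsto f[g]$ being a ring homomorphism in $f$ for fixed $g$, which is standard (Macdonald, Ch.~I.8). It also relies on $\omega$ and $\omega^r$ being ring automorphisms, and they are. The remaining work is the sign bookkeeping above, where homogeneity of $g$ is used so that every $\mu$ has size exactly $r$.
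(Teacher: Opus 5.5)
Your proposal is correct and follows the paper's proof essentially step for step. Like the paper, you reduce to $f=p_j$ using linearity and multiplicativity in $f$, expand $g=\sum_{\mu\vdash r}c_\mu p_\mu$, and match the signs via $(-1)^{jr-\ell(\mu)}=(-1)^{r(j-1)}(-1)^{r-\ell(\mu)}$; you are slightly more explicit than the paper in justifying that reduction.
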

\begin{proof}
It is enough to check $f=p_j$ and then multiply over parts of a power-sum monomial. If $g=\sum_{\mu\vdash r}c_\mu p_\mu$, then
\[
 \omega(p_j[g])
 =\sum_{\mu\vdash r}c_\mu(-1)^{jr-\ell(\mu)}p_{j\mu}
 =(-1)^{r(j-1)}p_j[\omega(g)].
\]
This is \eqref{eq:omega}; the last assertion uses $\omega(h_q)=e_q$ and $\omega(e_3)=h_3$.
\end{proof}

\section{Vanishing at the minimal twist}\label{sec:minimal}

We prove Theorem~\ref{thm:minimal} by working directly with the torus weights and root-subgroup actions.

Put $n=rd$, fix a basis $e_1,\ldots,e_n$ of $V$, and set
$\varpi=e_1\wedge\cdots\wedge e_n$.  For an increasing $r$-subset
$A=\{a_1<\cdots<a_r\}$, write
$e_A=e_{a_1}\wedge\cdots\wedge e_{a_r}$.  Monomials
\[
 e_{A_1}\cdots e_{A_d},\qquad A_1\leq\cdots\leq A_d,
\]
form a basis of $M=\Sym^d(\bigwedge^rV)$.  Their diagonal-torus weight is
the occurrence-count vector of the coordinates among the blocks.

Let $E_{ab}$ be the matrix unit satisfying $E_{ab}e_b=e_a$, and put
$\delta=(1,\ldots,1)$.  The target weights are
\begin{equation}\label{eq:target-weights-minimal}
 \wt(E_{ab}\otimes\varpi)=\delta+\eps_a-\eps_b.
\end{equation}
Thus only the determinant weight $\delta$ and the defect weights
$\delta+\eps_i-\eps_j$ can contribute.  This is a split-torus comodule
calculation, not an equality of characters on field-valued points.

The determinant-weight monomials are indexed by partitions of $\{1,\ldots,n\}$ into $d$ unordered $r$-subsets. For $P=\{A_1,\ldots,A_d\}$, write $m_P=\prod_{A\in P}e_A$.

\begin{lemma}[Odd block swap]\label{lem:block-swap}
Assume $r$ is odd, $d\geq3$, and $\operatorname{char}k\neq2$.  If
\[
 F:M\longrightarrow\End(V)\otimes\det(V)
\]
is $\GL(V)$-equivariant, then $F$ vanishes on the weight space $M_\delta$.
\end{lemma}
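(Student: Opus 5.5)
The approach is to use only permutation matrices in $\GL(V)(k)$. Equivariance for the group scheme implies equivariance for $k$-points, so these matrices are available. The idea is to compare how a suitable permutation acts on a determinant-weight monomial and on its image.

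\emph{Step 1 (where the image lies).} Let $m_P\in M_\delta$. By equivariance for the diagonal torus, $F(m_P)$ has weight $\delta$. By \eqref{eq:target-weights-minimal}, the weight-$\delta$ part of $\End(V)\otimes\det(V)$ is spanned by the vectors $E_{aa}\otimes\varpi$. Hence we can write
\[
 F(m_P)=\sum_{a=1}^n c_a(P)\,E_{aa}\otimes\varpi ,
\]
and it suffices to show that $c_a(P)=0$ for every $a$.

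\emph{Step 2 (the block swap).} Take two distinct blocks $A=\{a_1<\cdots<a_r\}$ and $B=\{b_1<\cdots<b_r\}$ of $P$. Let $\sigma$ be the permutation with $a_i\leftrightarrow b_i$ for each $i$, fixing all other indices, and let $\sigma$ also denote its permutation matrix.

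On the source: $\sigma$ is order-preserving from $A$ to $B$ and back, so $\sigma e_A=e_B$ and $\sigma e_B=e_A$ with no sign. Every other block is fixed pointwise. Since $\Sym^d$ is commutative, $\sigma\cdot m_P=m_P$.

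On the target: $\sigma$ is a product of $r$ disjoint transpositions, so $\det\sigma=(-1)^r=-1$ because $r$ is odd. Also $\sigma E_{aa}\sigma^{-1}=E_{\sigma(a)\sigma(a)}$. By \eqref{eq:target-action} with $\ell=1$, we get $\sigma\cdot(E_{aa}\otimes\varpi)=-E_{\sigma(a)\sigma(a)}\otimes\varpi$.

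Applying equivariance, $F(m_P)=F(\sigma m_P)=\sigma F(m_P)$, and comparing coefficients gives
\[
 c_{\sigma(a)}(P)=-c_a(P)\qquad\text{for all }a .
\]

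\emph{Step 3 (conclusion).} Fix an index $a$, and let $C$ be the block of $P$ containing it. Since $d\geq3$, we can choose two blocks $A,B$ of $P$, both different from $C$. The corresponding $\sigma$ fixes $a$, so Step 2 gives $c_a(P)=-c_a(P)$. Because $\operatorname{char}k\neq2$, this forces $c_a(P)=0$.

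As $a$ and $P$ were arbitrary, and the monomials $m_P$ span $M_\delta$, the map $F$ vanishes on $M_\delta$.

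There is no real obstacle in this argument. The points that need care are the sign bookkeeping in Step 2 and the hypotheses on which it rests:
\begin{itemize}
\item the order-preserving choice of $\sigma$, so that the source monomial is fixed with no sign;
\item the oddness of $r$, so that $\det\sigma=-1$;
\item the hypothesis $d\geq3$, which supplies a third block untouched by the swap.
\end{itemize}
The root-subgroup identity described in the introduction is not needed here. I expect it to enter only afterwards, when the defect weights $\delta+\eps_i-\eps_j$ are handled.
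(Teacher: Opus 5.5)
Your proposal is correct and follows the paper's proof essentially step for step: you decompose $F(m_P)$ along the $E_{aa}\otimes\varpi$, use the order-preserving swap $\sigma$ of two blocks avoiding the chosen index (available since $d\geq3$), note that $\sigma m_P=m_P$ while $\det\sigma=-1$ because $r$ is odd, and conclude $c_a=-c_a$, hence $c_a=0$ in characteristic $\neq2$. Your intermediate relation $c_{\sigma(a)}=-c_a$ for all $a$ is a harmless generalization of the paper's direct comparison of the coefficient of $E_{ii}$.
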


\begin{proof}
Fix $P$ and a coordinate $i$.  The unique block containing $i$ leaves at
least two blocks $A=\{a_1<\cdots<a_r\}$ and
$B=\{b_1<\cdots<b_r\}$ that avoid $i$.  Let
\[
 \sigma=(a_1\ b_1)\cdots(a_r\ b_r).
\]
The order-preserving matching gives $\sigma(e_A)=e_B$ and
$\sigma(e_B)=e_A$ without an internal wedge sign.  All other blocks are
fixed, and multiplication in $\Sym^d$ is commutative, so
$\sigma(m_P)=m_P$.  On the other hand,
$\det(\sigma)=(-1)^r=-1$.

Weight preservation gives
$F(m_P)=\sum_{a=1}^n c_aE_{aa}\otimes\varpi$.  Since $\sigma$ fixes $i$,
conjugation fixes $E_{ii}$, whereas the determinant factor changes sign.
Equivariance therefore gives $c_i=-c_i$.  The hypotheses on the
characteristic imply $c_i=0$.  Since $P$ and $i$ were arbitrary, the
assertion follows.
\end{proof}

\begin{lemma}[Universal root step]\label{lem:root-step}
Under the hypotheses of Lemma~\ref{lem:block-swap}, an equivariant $F$ also
vanishes on every defect-weight space
$M_{\delta+\eps_i-\eps_j}$, $i\neq j$.
\end{lemma}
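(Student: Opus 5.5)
The plan is to compare coefficients in the equivariance identity for a single root subgroup, taken over $k[t]$ rather than over field-valued points, and then invoke Lemma~\ref{lem:block-swap}. Fix $i\neq j$ and a monomial $m=e_{A_1}\cdots e_{A_d}$ of weight $\delta+\eps_i-\eps_j$. In such a monomial the coordinate $i$ occurs in exactly two blocks, $j$ occurs in none, and every other coordinate occurs once. By \eqref{eq:target-weights-minimal}, the only target vector of weight $\delta+\eps_i-\eps_j$ is $E_{ij}\otimes\varpi$. Hence
\[
 F(m)=c\,E_{ij}\otimes\varpi
\]
for some scalar $c$, and the goal is to show $c=0$. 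Since these monomials span $M_{\delta+\eps_i-\eps_j}$, this is enough.

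Next, apply the root element $u(t)=1+tE_{ji}\in\GL(V)(k[t])$, which sends $e_i\mapsto e_i+te_j$ and fixes the other basis vectors. Equivariance holds as an identity of $k[t]$-valued comodule maps, so
\[
 F(u(t)m)=u(t)\cdot F(m).
\]
Both sides can be expanded in powers of $t$.

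On the source side, $u(t)m=m+t\,m'+t^2m''$. Here $m'$ is the sum of the two monomials obtained by replacing one of the two occurrences of $i$ by $j$, so $m'\in M_\delta$. The term $m''$ replaces both occurrences and has weight $\delta+\eps_j-\eps_i$. Reordering indices inside a block only introduces signs, and a block containing both $i$ and $j$ cannot arise since $j$ is absent from $m$; none of this affects the argument.

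On the target side, $\det u(t)=1$. Hence $u(t)$ acts by conjugation alone:
\[
 (1+tE_{ji})E_{ij}(1-tE_{ji})=E_{ij}+t(E_{jj}-E_{ii})-t^2E_{ji}.
\]
Comparing coefficients of $t$ gives
\[
 F(m')=c\,(E_{jj}-E_{ii})\otimes\varpi.
\]
By Lemma~\ref{lem:block-swap}, $F$ vanishes on $M_\delta$, so the left side is zero. This holds even if $m'$ happens to be zero, in which case the conclusion is immediate. Since $i\neq j$, the matrix $E_{jj}-E_{ii}$ is nonzero in any characteristic, and therefore $c=0$.

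The main point requiring care is the justification of coefficient extraction. Equivariance must be used for the group scheme, i.e.\ over the ring $k[t]$ (equivalently, via the divided-power distribution operator $E_{ji}^{(1)}$). It cannot be used only for the finitely many $k$-points, since over a small finite field a polynomial identity in $t$ need not follow from its values. With this in place, no further characteristic hypothesis is used beyond the one already consumed in Lemma~\ref{lem:block-swap}.
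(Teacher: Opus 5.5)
Your proposal is correct and follows the paper's proof essentially step for step: write $F(m)=c\,E_{ij}\otimes\varpi$, apply $u_{ji}(t)=I+tE_{ji}$ over $k[t]$, note that the $t$-coefficient on the source side lies in $M_\delta$ and is killed by Lemma~\ref{lem:block-swap}, and read off $c(E_{jj}-E_{ii})\otimes\varpi=0$ from the conjugation identity. Your differences from the paper are minor. You work with spanning monomials instead of a general $x$, and you add remarks on $m''$ and on the distribution operator $E_{ji}^{(1)}$; these are harmless refinements.
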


\begin{proof}
Fix $i\neq j$ and $x\in M_{\delta+\eps_i-\eps_j}$.  By
\eqref{eq:target-weights-minimal},
\[
 F(x)=cE_{ij}\otimes\varpi
\]
for some $c\in k$.  Work over $k[t]$ with the root-subgroup element
\[
 u_{ji}(t)=I+tE_{ji}.
\]
Every monomial in $x$ contains $i$ twice and $j$ zero times.  The
coefficient of $t$ in $u_{ji}(t)x$ is therefore obtained by replacing one
occurrence of $i$ by $j$; it lies in $M_\delta$.  Cancellation can make
this coefficient zero but cannot change its weight.

The exact target identity is
\begin{equation}\label{eq:root-identity}
 u_{ji}(t)E_{ij}u_{ji}(t)^{-1}
 =E_{ij}+t(E_{jj}-E_{ii})-t^2E_{ji},
\end{equation}
and $\det u_{ji}(t)=1$.  Base-change $F$ to $k[t]$ and compare the
coefficient of $t$ in
$F(u_{ji}(t)x)=u_{ji}(t)F(x)$.  Lemma~\ref{lem:block-swap} kills the
coefficient on the left, while \eqref{eq:root-identity} gives
\[
 0=c(E_{jj}-E_{ii})\otimes\varpi.
\]
The two matrix units are linearly independent over every field, hence
$c=0$.
\end{proof}

\begin{proof}[Proof of Theorem~\ref{thm:minimal}]
Lemmas~\ref{lem:block-swap} and \ref{lem:root-step} kill every source
weight for which the target has a nonzero weight space.  All remaining
weights map to zero by torus equivariance.
\end{proof}

The coefficient comparison is an identity over $k[t]$, so it remains valid over finite base fields. The only use of $\operatorname{char}k\neq2$ is in Lemma~\ref{lem:block-swap}.

\begin{remark}[Exact boundaries]\label{rem:minimal-boundaries}
If $r$ is even, the block swap has determinant $+1$, and the conclusion is
false: graded commutativity makes wedge multiplication a nonzero map
\[
 \Sym^d(\textstyle\bigwedge^rV)\longrightarrow\det(V),
 \qquad n=rd,
\]
which becomes scalar matrix-valued after composing with
$\eta\mapsto\Id_V\otimes\eta$.  In characteristic two, the sign obstruction
in Lemma~\ref{lem:block-swap} disappears.  No characteristic-two conclusion
is asserted.
\end{remark}

In characteristic zero, the non-scalar target is a hook, and the vanishing follows from Langley--Remmel hook plethysm \cite[Theorem~3.1]{LangleyRemmel2004}. Semisimplicity also transfers this fixed-degree conclusion when the characteristic exceeds $rd$. The proof above covers the remaining small odd characteristics without a filtration hypothesis. For general modular structure of symmetric and exterior powers, see \cite{Donkin2001,HagueMcNinch2013}.

\subsection{Low-degree boundary}
The restriction $d\geq3$ has classical exceptions. If $d=1$ and $n=r$, the source is $\det(V)$ and the scalar inclusion
\[
 \eta\longmapsto\Id_V\otimes\eta
\]
is nonzero.

Let $d=2$, $n=2r$, assume $r$ is odd, and suppose that $2$ is invertible in $k$. Wedge multiplication is an alternating $\det(V)$-valued pairing on $W=\bigwedge^rV$. Together with the infinitesimal $\mathfrak{gl}(V)$-action and the nondegenerate trace pairing on $\End(V)$, it induces a linear map
\[
 \mu:\Sym^2W\longrightarrow\End(V)\otimes\det(V),
\]
characterized by the symmetric bilinear formula below, where $\mu(\alpha,\beta)=\mu(\alpha\beta)$:
\begin{equation}\label{eq:moment-map}
 \operatorname{tr}(\mu(\alpha,\beta)X)
 =\tfrac12\bigl((X\alpha)\wedge\beta+(X\beta)\wedge\alpha\bigr).
\end{equation}
This construction is equivariant. For complementary basis wedges $\alpha=e_1\wedge\cdots\wedge e_r$ and $\beta=e_{r+1}\wedge\cdots\wedge e_{2r}$, taking $X=E_{11}$ makes the right side of \eqref{eq:moment-map} equal to $\tfrac12\alpha\wedge\beta\neq0$, proving nonzeroness. This is the standard moment-map boundary; for $r=3$ it recovers Hitchin's six-dimensional endomorphism up to normalization \cite{Hitchin2000}. Related exterior-form constructions are discussed in \cite{Rubtsov2019}. These boundary constructions and the even-$r$ wedge map in Remark~\ref{rem:minimal-boundaries} are used as prior context, not as new results.

\section{The second twist in characteristic zero}\label{sec:second}
Assume $\operatorname{char}k=0$, take $r=3$, and put $n=3m$ and $d=2m$. These are the parameters permitted by Proposition~\ref{prop:balance} at twist two. Define
\begin{equation}\label{eq:coefficients}
 a_m=\langle h_{2m}[e_3],s_{(2^{3m})}\rangle,\qquad
 b_m=\langle h_{2m}[e_3],s_{(3,2^{3m-2},1)}\rangle.
\end{equation}
By \eqref{eq:target-split}, these are the multiplicities in $\Sym^{2m}(\bigwedge^3V)$ of the scalar and non-scalar irreducible constituents of the target of \eqref{eq:H}, respectively.

The conjugate partitions are $(3m,3m)$ and $(3m,3m-1,1)$. Lemma~\ref{lem:omega} therefore gives
\begin{equation}\label{eq:conjugated-coefficients}
 a_m=\langle e_{2m}[h_3],s_{(3m,3m,0)}\rangle,\qquad
 b_m=\langle e_{2m}[h_3],s_{(3m,3m-1,1)}\rangle.
\end{equation}
Since these partitions have at most three parts, the coefficients may be read in $\bigwedge^{2m}(\Sym^3 E)$ for a three-dimensional space $E$. Specialization to three variables does not change the multiplicity of a Schur function indexed by a partition with at most three parts.

\begin{lemma}[Weight support for exterior powers of ternary cubics]\label{lem:weight-support}
Let $E$ have basis $x,y,z$, and let $0\leq q\leq10$. Every weight $\nu$ of $\bigwedge^q(\Sym^3 E)$ satisfies $\nu_i\geq q-4$ for each coordinate $i$.
\end{lemma}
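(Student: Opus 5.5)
The weights of $\bigwedge^q(\Sym^3E)$ are exactly the sums $\sum_{u\in S}\wt(u)$ over $q$-element subsets $S$ of the monomial basis of $\Sym^3E$. This basis consists of the ten monomials $x^ay^bz^c$ with $a+b+c=3$. I would therefore reduce the lemma to a counting statement about such subsets. By the symmetry permuting $x,y,z$, it suffices to bound the $x$-coordinate $\nu_1=\sum_{u\in S}a(u)$, where $a(u)$ is the $x$-exponent of $u$.

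The key count is that exactly four monomials have $x$-exponent zero, namely $y^3,y^2z,yz^2,z^3$. Every other monomial has $x$-exponent at least one. Consequently, for any $q$-subset $S$, at most four of its elements avoid $x$, so at least $q-4$ of them contribute at least $1$ to $\nu_1$. This gives $\nu_1\geq q-4$. When $q\leq4$ the bound is trivial, since weights are nonnegative. The same argument applies to $y$ and $z$.

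I expect no real obstacle here. The only point to state carefully is that the weight-space decomposition of an exterior power of a torus-weight basis is indexed by subsets, so that distinct factors are used. The bound is exactly what fails for symmetric powers, and it is the point of passing to $e_{2m}[h_3]$ via Lemma~\ref{lem:omega}. One could sharpen the bound using exponents $2$ and $3$, but the stated inequality only needs the count of four $x$-free cubics.

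For the intended application with $q=2m$, the weights $(3m,3m,0)$ and $(3m,3m-1,1)$ have third coordinate $0$ or $1$. These are below $2m-4$ once $2m-4\geq2$, that is, for $m\geq3$. Hence these weights do not occur, and $a_m=b_m=0$ follows from \eqref{eq:conjugated-coefficients}, since a highest weight of a constituent must be a weight.
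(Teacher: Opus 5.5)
Your proof is correct and is essentially the paper's argument. The paper also notes that the distinct-monomial wedges form a weight basis and that only four cubic monomials avoid a given variable, so at least $q-4$ factors contribute at least one to that coordinate; it phrases this for $z$, whereas you work with $x$. One small point on your closing application: for $m\geq6$ the index $q=2m$ exceeds the lemma's range $q\leq10$, and the paper treats that case separately by noting $\bigwedge^{2m}(\Sym^3E)=0$, which gives the vanishing directly.
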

\begin{proof}
A basis of $\Sym^3E$ consists of its ten monomials $x^ay^bz^c$ with $a+b+c=3$. Exactly four have $c=0$:
\[
 x^3,\quad x^2y,\quad xy^2,\quad y^3.
\]
A basis wedge uses $q$ distinct monomials. At least $q-4$ of them involve $z$, so the total $z$ exponent is at least $q-4$. The same argument applies to the other coordinates. For $q<4$ the stated bound is automatic.
\end{proof}

\begin{proposition}[Second-twist trivector classification]\label{prop:second-twist}
For every $m\geq2$,
\[
 b_m=0,\qquad
 a_m=\begin{cases}1,&m=2,\\0,&m\geq3.\end{cases}
\]
Consequently, for $\operatorname{char}k=0$ and $\dim V=3m$, the space $\cH_{3m,3,2m,2}(V)$ is one-dimensional and scalar-valued at $m=2$, and is zero for $m\geq3$.
\end{proposition}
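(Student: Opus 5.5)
The plan is to work entirely with the conjugated form \eqref{eq:conjugated-coefficients}. There $a_m$ and $b_m$ are the multiplicities of the irreducible $\GL(E)$-modules of highest weights $(3m,3m,0)$ and $(3m,3m-1,1)$ in $\bigwedge^{2m}(\Sym^3E)$, with $\dim E=3$. Since $\Sym^3E$ is ten-dimensional, $\bigwedge^{2m}(\Sym^3E)=0$ once $2m>10$, so $a_m=b_m=0$ for $m\geq6$.

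For $3\leq m\leq5$ I apply Lemma~\ref{lem:weight-support} with $q=2m$. Every weight $\nu$ then satisfies $\nu_3\geq 2m-4\geq2$. The highest weights $(3m,3m,0)$ and $(3m,3m-1,1)$ have third coordinate $0$ or $1$, so neither occurs even as a weight. Because an irreducible constituent contributes its highest weight to the character, both multiplicities vanish. This gives $a_m=b_m=0$ for all $m\geq3$.

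The case $m=2$ is the only computational step, and I expect it to be the main obstacle. Here $q=4$ and the lemma gives nothing, so I would compute weight multiplicities of $\bigwedge^4(\Sym^3E)$ directly.

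\emph{Weights with $\nu_3=0$.} Such a basis wedge uses only the four $z$-free monomials, so it is $x^3\wedge x^2y\wedge xy^2\wedge y^3$, of weight $(6,6,0)$. Any dominant $\lambda$ strictly above $(6,6,0)$ with three parts has $\lambda_3=0$ and $\lambda\neq(6,6,0)$, so it is not a weight at all. Hence $(6,6,0)$ has multiplicity one and nothing above it can be a constituent, which gives $a_2=1$.

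\emph{Computing $b_2$.} Weight-$(6,5,1)$ wedges consist of three $z$-free monomials together with one of $x^2z$, $xyz$, $y^2z$. Enumerating them gives multiplicity $3$. I would then compute the multiplicities of the finitely many dominant weights $\lambda\trianglerighteq(6,5,1)$ with $\lambda_3=1$, namely $(7,4,1)$, $(8,3,1)$ and $(9,2,1)$. Peeling off constituents by Kostka triangularity (each $s_\lambda$ contributes $K_{\lambda,(6,5,1)}$ to the weight space) should leave zero for $s_{(6,5,1)}$. If the bookkeeping becomes unwieldy, the fallback is the boundary value already recorded in de Boeck's thesis, cited in the introduction.

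Finally, for the consequence: in characteristic zero, complete reducibility together with \eqref{eq:target-split}, whose two summands are non-isomorphic irreducibles, gives
\[
\dim\cH_{3m,3,2m,2}(V)=a_m+b_m .
\]
The scalar constituent accounts for $a_m$, so the Hom space is one-dimensional and scalar-valued at $m=2$, and zero for $m\geq3$.
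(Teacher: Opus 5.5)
\textbf{Gap in the computation of $b_2$.} Your treatment of $m\geq3$ and your deduction of the Hom statement from \eqref{eq:target-split} match the paper. Your argument for $a_2=1$ is also valid, and it is lighter than the paper's route. You note that $x^3\wedge x^2y\wedge xy^2\wedge y^3$ is the only $z$-free wedge, and that no dominant weight strictly above $(6,6,0)$ is a weight. The paper instead derives the full decomposition \eqref{eq:exterior4} and checks it by the dimension count $81+64+28+27+10=210$. The gap is in your computation of $b_2$.

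You peel off only the dominant $\lambda\trianglerighteq(6,5,1)$ with $\lambda_3=1$. Because $|\lambda|=12$ and $\lambda_1+\lambda_2\geq11$, the correct condition is $\lambda_3\leq1$. The case $\lambda_3=0$ contains $(6,6,0)$, which dominates $(6,5,1)$. You proved one paragraph earlier that it is a constituent. Its Kostka number is $K_{(6,6),(6,5,1)}=1$, given by the single tableau with first row $111111$ and second row $222223$.

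Now run your procedure exactly as stated:
\begin{itemize}
\item The weight space of $(8,3,1)$ is spanned by $x^3\wedge x^2y\wedge x^2z\wedge xy^2$, and nothing above it is a weight, since the maximal $x$-degree is $3+2+2+1=8$. Hence $c_{(8,3,1)}=1$.
\item The weight $(7,4,1)$ has multiplicity $2$, from $x^3\wedge x^2y\wedge x^2z\wedge y^3$ and $x^3\wedge x^2y\wedge xy^2\wedge xyz$. Hence $c_{(7,4,1)}=2-K_{(8,3,1),(7,4,1)}=1$.
\item Both $K_{(8,3,1),(6,5,1)}$ and $K_{(7,4,1),(6,5,1)}$ equal $1$. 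In each case the lone $3$ is forced into the third row.
\item $(9,2,1)$ is not a weight.
\end{itemize}
Your list therefore gives $3-1-1=1$, that is $b_2=1$, which contradicts the statement.

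Adding the missing term gives $3-1-1-1=0$, as required. Your list also omits $(10,1,1)$; this is harmless because it is not a weight. With the correction, your peeling argument is a complete self-contained proof of $b_2=0$, and you no longer need to fall back on de~Boeck's thesis. The trade-off against the paper's approach is this. Your route needs only four weight multiplicities, but it requires an exact enumeration of the dominating weights, which is where the slip occurred. The paper verifies the whole character of $\bigwedge^4(\Sym^3E)$ at once, which makes it robust to this kind of bookkeeping error.
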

\begin{proof}
First suppose $3\leq m\leq5$. Lemma~\ref{lem:weight-support} gives a lower bound $2m-4\geq2$ for the third coordinate of every weight in $\bigwedge^{2m}(\Sym^3E)$. Neither $(3m,3m,0)$ nor $(3m,3m-1,1)$ is a weight. Hence neither can be a highest weight of a constituent, and \eqref{eq:conjugated-coefficients} gives $a_m=b_m=0$. For $m\geq6$ the entire exterior power is zero, since $\dim\Sym^3E=10$.

For the known boundary $m=2$, the three-variable character calculation is
\begin{equation}\label{eq:exterior4}
 \bigwedge^4(\Sym^3E)
 \cong S_{(8,3,1)}E\oplus S_{(7,4,1)}E\oplus S_{(6,6,0)}E
       \oplus S_{(6,4,2)}E\oplus S_{(6,3,3)}E.
\end{equation}
To check this identity directly, extract the coefficient of $u^4$ in
\[
 \prod_{a+b+c=3}(1+u x^ay^bz^c)
\]
and subtract Schur characters in the highest-weight order
\[
 (8,3,1),\ (7,4,1),\ (6,6,0),\ (6,4,2),\ (6,3,3).
\]
Each multiplicity is one and the residual character is zero. Their dimensions are respectively $81,64,28,27,10$, with sum $210=\binom{10}{4}$. Thus $(6,6,0)$ occurs once and $(6,5,1)$ does not occur, yielding $a_2=1$ and $b_2=0$. The latter boundary also appears in \cite[p.~55]{deBoeck2015Thesis}. The conclusion about Hom follows from \eqref{eq:target-split}.
\end{proof}

The case $m=1$ is the scalar inclusion $\det(V)^2\to\End(V)\otimes\det(V)^2$, since $\dim V=3$ and $\bigwedge^3V=\det(V)$. No positive-characteristic conclusion is inferred from the semisimple calculation above.

\subsection{Signed Foulkes interpretation}
For completeness, the sign conventions in \eqref{eq:conjugated-coefficients} can also be read at the symmetric-group level. Put $H=S_3\wr S_{2m}\leq S_{6m}$ and let
\[
 \eps_{\mathrm{base}}((\sigma_1,\ldots,\sigma_{2m});\pi)
 =\prod_{a=1}^{2m}\sgnchar(\sigma_a).
\]
The characteristic of $M_m=\Ind_H^{S_{6m}}\eps_{\mathrm{base}}$ is $h_{2m}[e_3]$. Since a block transposition exchanges three pairs of letters,
\[
 \sgnchar_{S_{6m}}|_H
 =\eps_{\mathrm{base}}\,\Inf_{S_{2m}}^H(\sgnchar_{S_{2m}}),
\]
and hence
\[
 M_m\otimes\sgnchar_{S_{6m}}
 \cong\Ind_H^{S_{6m}}\Inf_{S_{2m}}^H(\sgnchar_{S_{2m}}).
\]
Its characteristic is $e_{2m}[h_3]$, and tensoring a Specht module by sign conjugates its partition. This is the standard twisted Foulkes formulation; see \cite{deBoeck2015Thesis,PagetWildon2016}. The weight argument above is enough to settle the specified family, without a general classification of twisted Foulkes constituents.

\section{A modular scalar class}\label{sec:modular}
The actual Hom space in positive characteristic need not agree with the characteristic-zero multiplicity. The first fixed case considered here illustrates the distinction.

\begin{theorem}[Computer-assisted classification in dimension nine]\label{thm:modular}
Let $V$ have dimension nine over a field $k$ of characteristic zero or odd characteristic, and set $W=\bigwedge^3V$. Then
\begin{equation}\label{eq:modular-dimension}
 \dim\Hom_{\GL(V)}\!\left(\Sym^6W,\End(V)\otimes\det(V)^2\right)
 =\begin{cases}
 1,&\operatorname{char}k=5,\\
 0,&\operatorname{char}k=0\text{ or }\operatorname{char}k=p\neq2,5.
 \end{cases}
\end{equation}
In characteristic five every map in this space is scalar-valued.
\end{theorem}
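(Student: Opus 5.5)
The plan is to reduce the Hom space to a finite integral linear system and then determine its rank over each prime. The same weight method as in Section~\ref{sec:minimal} applies. Here $n=9$, $r=3$, $d=6$, $\ell=2$, and the target weights are $2\delta+\eps_a-\eps_b$, where $2\delta=(2^9)$.

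An equivariant $F$ is determined by its restrictions to the source weight spaces $M_{2\delta}$ and $M_{2\delta+\eps_i-\eps_j}$. By symmetric-group permutation equivariance (the Weyl group acts by permuting coordinates, with the determinant sign), it suffices to know $F$ on $M_{2\delta}$ and on one defect space, say $M_{2\delta+\eps_1-\eps_2}$. Write $F(m)=\sum_a c_a(m)E_{aa}\otimes\varpi^2$ for determinant-weight monomials $m$, and $F(x)=c(x)E_{12}\otimes\varpi^2$ on the defect space.

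The first step is to record the conditions imposed by the root subgroups. For every root $u_{ab}(t)=I+tE_{ab}$ and every source monomial whose weight lies one root away from a target weight, compare coefficients of $t^k$ in $F(u_{ab}(t)x)=u_{ab}(t)F(x)$ over $k[t]$. For the algebraic group scheme $\GL(V)$ one needs all divided-power coefficients (the hyperalgebra $E_{ab}^{(k)}$), not only $k=1$. This matters in characteristic $5\le 6$, where the $p$-th divided powers do not come from the Lie algebra. Together with torus and Weyl equivariance, these conditions characterize the Schur-algebra Hom. I would justify this via the standard presentation of the hyperalgebra (Kostant $\Z$-form) generated by the $E_{ab}^{(k)}$ and the binomial torus elements.

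All these equations have integer coefficients in a basis of $\Z$-forms: the ordinary $\Sym^6$ over $\Z$, the target lattice, and the divided-power actions. So the Hom space over $k$ is the kernel of a single integer matrix $R$ reduced mod $\operatorname{char}k$, and its dimension is $\#\text{unknowns}-\rank_k R$. I would reduce the unknowns by $S_9$-symmetry: orbits of set partitions of $\{1,\dots,9\}$ into six 3-subsets with multiplicities giving weight $(2^9)$, with the sign character tracked. The characteristic-zero value $0$ is consistent with Section~\ref{sec:second}: at $m=3$, Proposition~\ref{prop:second-twist} gives $a_3=b_3=0$. Hence $R$ has full column rank over $\Q$.

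The second step is the main obstacle, which is computational: identifying exactly the primes where $R$ drops rank. The plan is to compute the Smith normal form (or the elementary divisors of a full-rank maximal minor) of $R$ over $\Z$. One then verifies that the only elementary divisor greater than $1$ is supported at $5$ (allowing possibly $2$ and $3$, with $2$ excluded by hypothesis), with exactly one invariant factor divisible by $5$, and exactly none divisible by $3$. For certification, I would supply an explicit integer left inverse up to a determinant whose prime factorization avoids all $p\neq 2,5$ (or handle $p=3$ separately by an explicit mod-$3$ rank computation). In characteristic five I would exhibit an explicit kernel vector.

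Finally, the kernel vector in characteristic five is scalar-valued. To see this, check that its defect-space coordinates vanish and that on $M_{2\delta}$ one has $c_a(m)$ independent of $a$. Equivalently, the defect unknowns are forced to zero mod $5$ by the $t^1$ root equations, as in Lemma~\ref{lem:root-step}, leaving only the trace-direction coefficients. One-dimensionality then shows every map in the space is scalar-valued.
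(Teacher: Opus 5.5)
Your plan is essentially the paper's proof. The shared steps are: a torus and signed-permutation reduction to finitely many orbit parameters ($47$ in the paper), universal root-subgroup equations compared coefficientwise over $k[t]$ through $t^6$, and an integer matrix whose full minors isolate the prime $5$ (the paper finds minor gcd $2^{14}\cdot5$ and ranks $47,47,46$ over $\Q,\F_3,\F_5$). This ends with a direct check that the mod-$5$ kernel vector has zero defect coordinates and equal diagonal coefficients. Deriving the characteristic-zero case from $a_3=b_3=0$ in Proposition~\ref{prop:second-twist} is a valid shortcut.

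Two small corrections are needed. First, at $\ell=2$ one has $\det(\sigma)^2=1$, so the signs in the orbit reduction come from reordering wedge factors in the source, not from the determinant. Second, for the same reason your ``equivalently'' remark about the $t^1$ root equations is unfounded. Lemma~\ref{lem:root-step} relied on the block-swap vanishing on the determinant-weight space, which fails here (the characteristic-five map is nonzero on $M_{2\delta}$). So scalarity must be read off the computed kernel, as you first propose.
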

\begin{proof}
The equivariance conditions reduce to a $475\times47$ integer matrix, as described in Appendix~\ref{app:certificate}. Eight full minors have determinant gcd $2^{14}\cdot5$, so the matrix has full column rank in characteristic zero and in every odd characteristic other than five. Its rank in characteristic five is $46$. The one-dimensional kernel gives zero off-diagonal coefficients and equal diagonal coefficients on every source basis element. These exact calculations, together with the reduction to the displayed Hom space, prove the statement. Online Resource~1 supplies the matrix, parameter map, minors, kernel and scalarity records, and replay code.
\end{proof}

The scalar component is intrinsically a morphism
\[
 F_{\mathrm{sc}}:\Sym^6W\longrightarrow\det(V)^2,
 \qquad F=\Id_V\otimes F_{\mathrm{sc}}.
\]
For finite-dimensional $W$ there is a natural identification
\begin{equation}\label{eq:divided-powers-duality}
 (\Sym^6W)^*\otimes\det(V)^2
 \cong\Gamma^6(W^*)\otimes\det(V)^2.
\end{equation}
Thus the invariant class representing $F_{\mathrm{sc}}$ lies in the space on the right, not in the ordinary coordinate-polynomial space $\Sym^6(W^*)\otimes\det(V)^2$. The distinction between divided powers and ordinary invariant polynomials is also central to \cite{Tange2025}.

Fix an ordered basis $e_0,\ldots,e_8$ and a volume element $\varpi=e_0\wedge\cdots\wedge e_8$. Define the coordinate functional $\lambda_{5,\varpi}$ by
\begin{equation}\label{eq:scalar-trivialization}
 F_{\mathrm{sc}}(v)=\lambda_{5,\varpi}(v)\varpi^2,
 \qquad
 F(v)=\lambda_{5,\varpi}(v)\Id_V\otimes\varpi^2.
\end{equation}
The normalization in Online Resource~1 gives
\begin{equation}\label{eq:witness}
 \lambda_{5,\varpi}(e_{012}^2e_{345}^2e_{678}^2)=1,
 \qquad e_{abc}=e_a\wedge e_b\wedge e_c.
\end{equation}
This proves that the abstract functional is nonzero, but it does not assert nonzero evaluation on a pure power.

\begin{proposition}[Vanishing on pure powers]\label{prop:pure-power}
In characteristic five, $\lambda_{5,\varpi}(T^6)=0$ for every $T\in W$.
\end{proposition}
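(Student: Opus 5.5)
The plan is to show that the map $T\mapsto\lambda_{5,\varpi}(T^6)$ is a $\GL(V)$-semi-invariant polynomial function on $W$ of degree six, then prove that such a function is forced to vanish in characteristic five.

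\textbf{Step 1: the structure of $T\mapsto T^6$.} The assignment $T\mapsto T^6\in\Sym^6W$ is a homogeneous polynomial map of degree six. It factors as
\[
 W\longrightarrow\Gamma^6W\longrightarrow\Sym^6W,\qquad
 T\longmapsto T^{[6]}\longmapsto 6!\,T^{[6]},
\]
where the second arrow is the canonical map $\gamma_6\mapsto\mathrm{sym}$, which sends $T^{[6]}$ to $T^6$. Here I use the convention $T^6=6!\,T^{[6]}$ inside the symmetric tensors. The precise point to verify is that the composite
\[
 \Gamma^6W\to W^{\otimes6}\to\Sym^6W
\]
sends $T^{[6]}=T^{\otimes6}$ to $T^6$. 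It also sends a general divided-power monomial $\prod T_i^{[a_i]}$ to $\bigl(6!/\prod a_i!\bigr)\prod T_i^{a_i}$.

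\textbf{Step 2: expand $T^6$ in monomials.} Write $T=\sum_A t_Ae_A$. By the multinomial theorem, the coefficient of a basis monomial $\prod e_A^{a_A}$ in $T^6$ is $\binom{6}{(a_A)}\prod t_A^{a_A}$. So the polynomial $\lambda_{5,\varpi}(T^6)$ has coefficients
\[
 \binom{6}{(a_A)}\,\lambda_{5,\varpi}\Bigl(\prod e_A^{a_A}\Bigr).
\]
By torus equivariance, only the determinant-squared weight $(2,\dots,2)$ can contribute.

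\textbf{Step 3: the main argument via divisibility.} The idea is to show that on every source basis monomial of weight $2\delta$ on which $\lambda_{5,\varpi}$ is nonzero, the multinomial coefficient is divisible by $5$.

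The multinomial $\binom{6}{(a_A)}$ is prime to $5$ exactly when, base $5$, the parts $a_A$ add without carry. Since $6=(11)_5$, this means one part equals $5$ and one part equals $1$, or else all parts are $\le1$ with at most one part equal to $5$. So the parts are either $(5,1)$ or six distinct blocks with $a_A=1$. In the notation of Step 2, the only monomials with coefficient prime to $5$ are:
\begin{itemize}
\item[(i)] products of six distinct $e_A$, and
\item[(ii)] $e_A^5e_B$.
\end{itemize}
A type (ii) monomial has weight containing an entry $\ge5$, which is not $2\delta$, so it contributes nothing. Hence the claim reduces to:
\[
 \lambda_{5,\varpi}\ \text{vanishes on every squarefree monomial}\ e_{A_1}\cdots e_{A_6}\ \text{with each index used exactly twice.}
\]

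\textbf{Step 4: the main obstacle.} The key difficulty is proving this vanishing on squarefree monomials. I see two routes.

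The first route is to extract it from the kernel vector recorded in Online Resource~1. That certificate lists $\lambda_{5,\varpi}$ on the reduced parameter set of Appendix~\ref{app:certificate}, which consists of $S_9$-orbit representatives of weight-$2\delta$ monomials. The check is then finite: read off that every orbit of six distinct triples covering each index twice has coefficient $0$.

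The second route is conceptual, and I would try it first. Compare with the characteristic-zero situation, where $\lambda$ vanishes. Suppose the functional restricted to squarefree monomials were nonzero. It is $S_9$-semi-invariant, and it is reached from squarefree monomials by root operators $E_{ji}$. Then it would define, after lifting the polynomial $\lambda(T^6)$ to $\Z_{(5)}$, a degree-six $\SL_9$-invariant of $\bigwedge^3$ in characteristic zero. The dimension count above says no such invariant exists, and this characteristic-zero absence would force a contradiction.

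Because that lifting step is delicate, I expect to fall back on the certificate check in the first route.
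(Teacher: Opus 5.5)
Your Steps 1 and 2 are fine, but Step 3 misapplies the carry criterion, and the rest of the proposal then chases a false target.

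Since $6=(11)_5$, a carry-free sum of the parts needs their units digits to add to $1$ and their fives digits to add to $1$. So the only profiles with $\binom{6}{(a_A)}\not\equiv0\pmod5$ are $(6)$ and $(5,1)$; you also omitted $(6)$. Six parts equal to $1$ have units digits summing to $6$, which carries. Indeed $\binom{6}{1,1,1,1,1,1}=6!=720=5\cdot144$.

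Both surviving profiles contain a block of multiplicity at least $5$. At weight $2\delta=(2^9)$, however, a block of multiplicity $a$ contributes $a$ to each of its three indices, so $a\leq2$. Every supported monomial therefore has profile $(2,2,2)$, $(2,2,1,1)$, $(2,1^4)$ or $(1^6)$. The corresponding multinomial coefficients are $90$, $180$, $360$ and $720$, all divisible by $5$, so $\lambda_{5,\varpi}(T^6)=0$ follows at once. This is exactly the paper's proof, and it uses nothing about $\lambda_{5,\varpi}$ beyond its weight support.

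Step 4 is therefore unnecessary. Worse, the reduced claim you set out to prove there is false, so neither of your routes could succeed.

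Take the squarefree monomial $e_{018}e_{012}e_{234}e_{345}e_{567}e_{678}$, which is dual to a hexagon with alternate edges doubled. Its $S_9$-stabilizer is generated by three kinds of element, each acting with sign $+1$ on the monomial:
\begin{itemize}
\item the rotation $(0\,3\,6)(1\,4\,7)(2\,5\,8)$, which has odd order;
\item the reflection $(0\,3)(1\,4)(5\,8)$, whose wedge signs you can check multiply to $+1$;
\item the copy swaps $(0\,1)$, $(3\,4)$, $(6\,7)$, each of which flips two wedges.
\end{itemize}
So none of its diagonal coefficient states is killed by sign. They lie among the $17$ diagonal parameters of Table~\ref{tab:orbits}, and all of those have nonzero entries modulo $5$ in the normalized kernel vector. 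Hence $\lambda_{5,\varpi}$ is nonzero on this monomial, and the certificate check in your first route would fail. Your second route would be trying to lift a polynomial that is identically zero, so it has nothing to compare with in characteristic zero.

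The pure-power vanishing comes from the factor $6!$ and the bound $a\leq2$, not from any vanishing of $\lambda_{5,\varpi}$ on squarefree monomials.
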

\begin{proof}
Torus equivariance restricts the support to incidence weight $(2^9)$. Each trivector basis coordinate therefore occurs with multiplicity at most two. The possible multiplicity profiles of a supported degree-six monomial are
\[
 (2,2,2),\quad(2,2,1,1),\quad(2,1,1,1,1),\quad(1,1,1,1,1,1).
\]
Their multinomial coefficients in $T^6$ are $90,180,360,720$, respectively. All vanish modulo five, so every supported term disappears on the diagonal.
\end{proof}

If $\varpi'=c\varpi$ with $c\neq0$, then $\lambda_{5,\varpi'}=c^{-2}\lambda_{5,\varpi}$. Pure-power vanishing is consequently intrinsic. The nonzero class in \eqref{eq:divided-powers-duality} is not a nonzero ordinary degree-six invariant of trivectors. In characteristic zero, for $\dim V=9$, the ordinary invariant algebra $k[\bigwedge^3V]^{\operatorname{SL}(V)}$ is polynomial with generator degrees $12,18,24,30$ \cite[Proposition~2.7]{RainsSam2018}; the corresponding geometry is studied in \cite{GrusonSam2015}. These coordinate invariants and the class in Theorem~\ref{thm:modular} are different objects.

More generally, diagonal evaluation of any morphism in \eqref{eq:H} satisfies
\[
 F((gT)^d)=\det(g)^\ell\,gF(T^d)g^{-1}
\]
after trivializing the determinant line. In small characteristic this operation is not faithful, as Proposition~\ref{prop:pure-power} demonstrates. The results here concern the specified symmetric-power Hom spaces; they do not exclude ordinary polynomial covariants with a different linearization, rational covariants, or operators on other representations.

\appendix
\section{Exact certificate for Theorem~\ref{thm:modular}}\label{app:certificate}
This appendix gives the finite reduction behind the computer-assisted result. The data and code are supplied with the article as Online Resource~1. All calculations use integers or finite-field residues.

\subsection{Weights and signed permutation relations}
Use the basis and volume element fixed in Section~\ref{sec:modular}, and let $M=\Sym^6(\bigwedge^3V)$. The target weights are $2\delta$ and $2\delta+\eps_i-\eps_j$, where $\delta=(1,\ldots,1)$. Only the scalar-weight space and a fixed defect-weight space need separate parametrizations; permutations identify the other defect spaces. Multiset enumeration gives
\begin{equation}\label{eq:basis-counts}
 \dim M_{(2^9)}=128800,\qquad
 \dim M_{(3,1,2^7)}=74445.
\end{equation}
A coefficient state records a source monomial together with the target matrix entry. Signed adjacent-transposition traversal on these states gives the counts in Table~\ref{tab:orbits}. A negative-orientation stabilizer forces the corresponding coefficient to be its own negative and hence to vanish in odd characteristic.

\begin{table}[H]
\centering
\caption{Signed coefficient-state orbit reduction in odd characteristic.}\label{tab:orbits}
\begin{tabular}{lrrr}
\toprule
Coefficient type & Raw orbits & Killed by sign & Parameters\\
\midrule
Diagonal & 21 & 4 & 17\\
Fixed defect & 44 & 14 & 30\\
\midrule
Total & 65 & 18 & 47\\
\bottomrule
\end{tabular}
\end{table}
The resulting $17+30=47$ parameters give the most general torus- and permutation-equivariant map in odd characteristic. No scalar/trace-zero splitting is used.

\subsection{Universal root equations}
For $u_{01}(t)=I+tE_{01}$, impose
\begin{equation}\label{eq:universal}
 F(u_{01}(t)v)=u_{01}(t)F(v)
\end{equation}
coefficientwise through degree six on every source weight whose root expansion can meet a target weight. The source weights mapping to zero must also satisfy these equations. Permutations commuting with the root reduce the $179$ relevant profiles to $14$ canonical classes. Signed permutation equivariance then supplies every input monomial in the profiles, and permutation conjugacy supplies every root subgroup.

These are universal polynomial identities, not tests at finitely many parameter values. Together with the torus and permutation relations they give the generators of the degree-$18$ Schur-algebra action \cite{Green1980}. Exact sign deduplication yields
\begin{equation}\label{eq:matrix}
 A\in\operatorname{Mat}_{475\times47}(\Z),
 \qquad \#\{(i,j):A_{ij}\neq0\}=1250.
\end{equation}
The row encoding and its SHA-256 digest are retained in Online Resource~1. Since reduction is over the prime field and matrix rank is unchanged by scalar extension, the kernel dimensions apply over every field of the specified characteristic.

\subsection{Rank and scalarity certificates}
Eight retained $47\times47$ row minors have determinant gcd
\begin{equation}\label{eq:gcd}
 81920=2^{14}\cdot5.
\end{equation}
Any prime at which the rank drops must divide all full minors, hence also the gcd of these eight. A nonzero retained minor proves rank $47$ over $\Q$; \eqref{eq:gcd} proves full column rank in every odd characteristic other than five. Exact elimination gives
\[
 \rank_{\Q}A=47,\qquad\rank_{\F_3}A=47,\qquad\rank_{\F_5}A=46.
\]
The normalized generator of $\ker(A\bmod5)$ is
\[
 (1,2,2,2,1,1,4,4,4,4,4,2,2,2,2,1,1,
    \underbrace{0,\ldots,0}_{30}).
\]
The final thirty coordinates are precisely the off-diagonal parameters. On every one of the $128800$ scalar-weight monomials, substitution gives nine equal diagonal coefficients. Together with the zero images of all other source weights, this proves scalarity on the entire module. Equation~\eqref{eq:witness} supplies a nonzero abstract value; Proposition~\ref{prop:pure-power} explains why it is not a nonzero diagonal evaluation.

The retained certificate includes a reconstruction by a separately structured implementation. An explicit signed permutation of the $47$ parameters identifies the complete $475$-row matrices. A separate exact computer-algebra calculation reproduces the ranks and all eight minor determinants. Online Resource~1 contains the construction primitives, compressed bases, parameter map, root equations, minors, kernel and scalarity records, abstract witness, tests, and exact replay commands. The computational proof depends on the completeness of the weight/root reduction and on these exact finite calculations; it is not a proof-assistant formalization.

\section*{Supplementary information}
\noindent\textbf{Online Resource 1.} Exact integer and finite-field certificates and replay code for Theorem~\ref{thm:modular}, together with an exact check of the characteristic-zero calculation in Proposition~\ref{prop:second-twist}. The resource is supplied as \texttt{ESM\_1.zip}.

\section*{Statements and Declarations}
\subsection*{Funding}
No specific funding was received for this work.
\subsection*{Competing interests}
The author declares no competing financial or non-financial interests relevant to this work.
\subsection*{Author contribution}
Jingchuan Ma is the sole author and takes responsibility for the manuscript and its accompanying computational material.
\subsection*{Data and code availability}
The exact verification code and machine-readable certificates are supplied with this submission as Online Resource~1. No external empirical datasets were used.
\subsection*{Use of generative AI}
ChatGPT assisted in writing code for verification purposes and contributed to the translation and revision of the manuscript. The author bears full responsibility for the final version, mathematical assertions, citations, and supporting evidence.

\bibliographystyle{splncs04}
\bibliography{references}
\end{document}